\newtheorem{problem}{Problem}
\newtheorem{theorem}{Theorem}[section]
\newtheorem{lemma}[theorem]{Lemma}
\newtheorem{prop}[theorem]{Proposition}
\newcommand{\arr}{\rightarrow}
\newcommand{\tp}{\mathop{\mathsf{rist}}\nolimits}
\newcommand{\rs}{\mathop{\mathsf{rist}}\nolimits}
\newcommand{\Rs}{\mathop{\mathsf{Rist}}\nolimits}
\newcommand{\st}{\mathop{\mathrm{Stab}}\nolimits}
\newcommand{\at}{\mathop{\mathrm{Aut}}\nolimits}
\newcommand{\pol}{\mathop{\mathrm{Pol}}\nolimits}
\newcommand{\fat}{\mathop{\mathrm{FAut}}\nolimits}
\newcommand{\rat}{\mathop{\mathrm{RAut}}\nolimits}
\newcommand{\ii}{\mathop{\mathcal{I}}\nolimits}
\newcommand{\x}{\mathop{\mathsf{X}}\nolimits}
\newcommand{\lc}{\langle}
\newcommand{\rc}{\rangle}
\author{Yaroslav Lavrenyuk\\\small{\it Faculty of Mechanics and Mathematics,
Kyiv Taras Shevchenko University, Kyiv, Ukraine}\\
\small{\tt ylavrenyuk\symbol{64}univ.kiev.ua}}
\title{Minimal generating sets in wreath products}
\begin{document}

\maketitle

\abstract{We find some sufficient conditions under which the permutational
wreath product of two groups  has a minimal generating set. In
particular we prove that for a regular rooted tree
 the group of all automorphisms and the group
of all finite-state automorphisms of such a tree satisfy these
conditions.}

\section{Introduction}
We consider the following problem
\begin{problem}
\label{prob_Dom}
Let $T$ be a regular rooted tree. Do the group of all
automorphisms and the group of all finite-state automorphisms of
$T$ have any minimal generating set?
\end{problem}

Originally such a problem was posted by B. Cs\'{a}k\'{a}ny and F.
G\'{e}cseg \cite{csac_gecen} in terms of automata in 1965. They
asked whether or not the semigroup of all automaton
transformations, the semigroup of all finite automaton
transformations, the group of all bijective automaton
transformations, and the group of all finite bijective automaton
transformations over a fixed finite alphabet with at least two
elements have a minimal generating set?

Answer for semigroups is negative. This result was obtained
independently by S. Aleshin \cite{Aleshin1} in 1970 and P.
D\"{o}m\"{o}si \cite{Domosi1} in 1972. The question about groups
(i.e. Problem~\ref{prob_Dom}) was also formulated in papers of P.
D\"{o}m\"{o}si. Particularly it was in \cite[Problem
2.1]{Domosi_probl} and \cite[Problem 2.31]{Domosi_Nehaniv}.
Moreover this problem is mentioned in the
papers~\cite{aleshin:automata, problaut}.

Among works related to this problem we mention the result of
Andriy Oliynyk from \cite{aol11}. Namely, it was proven that
finite-state wreath product of transformation semigroups is not
finitely generated and in some cases doesn't have a minimal
generating set. We also mention papers devoted to the study of
generating sets in projective limits of wreath products of groups
\cite{bhattacharjee,quick,bondarenko,lucchini}.

We find some sufficient conditions under which the permutational
wreath product of finite group and infinite group  has a minimal
generating set (Theorem~\ref{tm_2011_100}). We also give a few
examples of groups and classes of groups satisfying such
conditions. In particular we prove that for a regular rooted tree
 the group of all automorphisms and the group
of all finite-state automorphisms of such a tree satisfy these
conditions (Theorems~\ref{cor_2011_121} and \ref{cor_2011_121_2}).
Therefore these groups have minimal generating sets. Thus
Problem~\ref{prob_Dom} is solved positively.

Most results of this paper were announced without proofs in
~\cite{lavr:mgsen,lavr:mgs2en}. Application of
Theorem~\ref{tm_2011_100} to wider class of wreath branch
groups will be the subject of forthcoming paper.

 The author gratefully acknowledges the many helpful suggestions of Ievgen Bondarenko, Andriy Oliynyk, and Wital
Sushchansky during the preparation of the paper.

\section{Main theorem}
We first recall the notion of the permutational wreath product.

Let $(A,X)$ be a permutation group and let $H$ be a group. Then
\emph{the permutational wreath product} $(A,X)\wr H$ is the
semi-direct product $(A,X)\rightthreetimes H^X$, where $(A,X)$
acts on the direct power $H^X$ by the respective permutations of
the direct factors.

We will say that a permutation group $(A,X)$ satisfies the
condition {\bf PS} if:
\begin{enumerate}
\item
The group $(A,X)$ is finite and transitive.
\item
There are subsets $X_1,X_2$ of $X$ and subgroups $A_1,A_2$ of $A$
with the following properties:
\begin{itemize}
\item
$(A_1,X)$ and $(A_2,X)$ act transitively on $X_1$ and $X_2$
respectively and act trivially on $X\setminus X_1$ and $X\setminus
X_2$ respectively.
\item
$X_1$ and $X_2$ do not intersect.
\item
$|X_1|\ge 2$, $|X_2|\ge 3$.
\item
If $|X_1|= 2$ then there is  $a\in A$ satisfying $a(X_1)\cap
X_2\ne
\emptyset$ and $a(X_1)\nsubseteq X_2$.
\end{itemize}
\end{enumerate}

We say that a group $G$ satisfies \emph{the L-condition}, if $G$
is decomposed into permutational wreath product $G= (A,X)\wr H$
and there are a normal subgroup $H_0$ of $H$ and an integer $k>1$
with the following properties:
\begin{enumerate}
\item
The permutation group $(A,X)$ satisfies the condition {\bf PS},
\item
The quotient $H/H_0$ has infinite minimal generating set,
\item
$|H/H_0|\ge|H_0|$,
\item
Either $H_0<H'$ or $H'\lneq H_0<H^kH'$, where $H'$ is the
commutator subgroup of $H$ and $H^k=\langle
\{h^k , h\in H\}\rangle$,
\item
If $H'\lneq H_0<H^kH'$ then there is a subset $C$ of some minimal
generating set of $H/H_0$ such that
\begin{enumerate}
\item
$|C|=|H/H_0|$,
\item
For every coset $c\in C$ there is $h\in c$ of order $k$.
\end{enumerate}
\end{enumerate}

Note that condition 2 of the definition of the L-condition imply
that  $H$ is infinite groups.

The main result of the paper is the following theorem.

\begin{theorem}
\label{tm_2011_100}
A group with the L-condition has a minimal generating set.
\end{theorem}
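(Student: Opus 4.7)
The plan is to construct a minimal generating set $S$ of $G=(A,X)\wr H$ of the same cardinality as a given infinite minimal generating set $M=\{\bar m_i:i\in I\}$ of $H/H_0$. Fix lifts $m_i\in H$ of each $\bar m_i$; using the bound $|H/H_0|\ge|H_0|$, inject along $I$ both a generating set of $H_0$ and a finite set of generators of the finite group $A$. The elements $g_i\in S$ will have the form $g_i=(a_i,\mathbf h_i)\in A\ltimes H^X$ with $\mathbf h_i$ placing $m_i$ on (some of) the $X_1$-coordinates and an $H_0$-generator or auxiliary element on (some of) the $X_2$-coordinates, while the $a_i$ run through the chosen generators of $A$ for finitely many indices and are the identity otherwise.

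For the generation step I would exploit the condition \textbf{PS} directly. Since $A_1,A_2\le A$ act transitively on $X_1,X_2$ and trivially off them, suitable conjugations and commutators of the $g_i$ with elements of $A_1$ and $A_2$ extract tuples of $H^X$ supported on a single coordinate of $X_1$ or $X_2$. The hypothesis $|X_2|\ge 3$ gives room for the commutator manipulations on the $X_2$-block; when $|X_1|=2$ the fallback element $a\in A$ with $a(X_1)\cap X_2\ne\emptyset$ and $a(X_1)\not\subseteq X_2$ is what is needed to move data between the $X_1$- and $X_2$-blocks and complete the extraction. Once $H^X$ is spanned, the $a_i$-components generate the top group $A$ and hence all of $G$.

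For minimality I need, for every $j\in I$, to show $g_j\notin\langle S\setminus\{g_j\}\rangle$. The strategy is to produce a homomorphism $\varphi\colon G\to H/H_0$ (or a closely related target) under which $\varphi(g_j)$ equals $\bar m_j$ (up to factors in $\langle\bar m_i:i\ne j\rangle$) while each $\varphi(g_i)$ with $i\ne j$ lies in $\langle\bar m_i:i\ne j\rangle$; minimality of $M$ in $H/H_0$ then forces $\varphi(g_j)\notin\varphi(\langle S\setminus\{g_j\}\rangle)$. In case $H_0<H'$, $H/H_0$ is in general non-abelian and $\varphi$ must be built by combining the projection $G\to H^X$ with an $A$-invariant projection $H^X\to H/H_0$ adapted to the $X_1$-block. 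In case $H'\lneq H_0<H^kH'$, $H/H_0$ is abelian of exponent dividing $k$, and hypothesis 5 (the existence of order-$k$ lifts for a subset $C$ of a minimal generating set, with $|C|=|H/H_0|$) is exactly what guarantees that the image of $g_j$ in the exponent-$k$ quotient $H/H^kH'$ has the precise order required to avoid being produced by the remaining generators.

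The main obstacle will be the minimality verification in case 4(a): when $H/H_0$ is non-abelian, an $A$-invariant projection $H^X\to H/H_0$ does not come for free and must be crafted so that the auxiliary $X_2$-data in each $g_i$ (carrying $H_0$-generators and $A$-generators rather than the $m_i$'s) does not pollute the image. Verifying that the \textbf{PS} hypotheses ($X_1\cap X_2=\emptyset$, $|X_2|\ge 3$, the fallback for $|X_1|=2$) deliver the commutator-cancellation freedom needed to isolate the $m_j$-contribution of $g_j$ in the projection is likely the technical heart of the argument, and is where the asymmetric roles of $X_1$ (carrying the $m_i$) and $X_2$ (carrying the auxiliary data) become essential.
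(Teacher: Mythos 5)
Your overall architecture matches the paper's: generators of the form (lift of $\bar m_i$ on an $X_1$-coordinate, element of $H_0$ on an $X_2$-coordinate), generation by extracting single-coordinate tuples via conjugation and commutation with $A_1$, $A_2$, and minimality by reduction to the minimality of the chosen generating set of $H/H_0$. But the minimality step as you propose it has a genuine gap that you flag without resolving: in the case $H_0<H'$ there is \emph{no} $A$-invariant homomorphism $H^X\to H/H_0$. Since $(A,X)$ is transitive and $|X|\ge 2$, any homomorphism from $H^X$ that is constant on $A$-orbits identifies all coordinates and forces its image to be abelian, hence factors through $H/H'$; when $H_0\lneq H'$ the quotient $H/H_0$ does not. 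The paper avoids this entirely: it does not build a homomorphism on $G$, but uses the semidirect product structure to rewrite any hypothetical expression of $q_j$ over $S\setminus\{q_j\}$ as a product of $A$-conjugates $(q_{i_1}^{\epsilon_1})^{a_1}\cdots(q_{i_m}^{\epsilon_m})^{a_m}$ lying in the base group $K=H^X$, and then reads off the single coordinate $0$: since conjugation by $A$ only permutes coordinates, each factor contributes to coordinate $0$ either some $f_{i'}^{\pm1}$ with $i'\ne j$, an element of $H_0$, or $e$; applying the canonical map $\psi\colon H\to H/H_0$ then contradicts minimality of $\bar F$. You need this rewriting device (a homomorphism defined only on $K$, applied after a normal-form reduction), not a globally defined $A$-invariant projection, which cannot exist in general.

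A second, smaller misstep: you assign condition 5 (the order-$k$ lifts) to the minimality verification, but in the paper it is used for \emph{generation}. In the case $H'\lneq H_0<H^kH'$ one must show $(E,\ldots,E,H^k)\le\langle S\rangle$, and this is done by taking the $k$-th power of an element whose $X_1$-coordinate carries $f_i$; that coordinate dies precisely because $f_i^k=e$, leaving $h^k$ on an $X_2$-coordinate. Without arranging the lifts to have order $k$, a residue $f_i^k\ne e$ survives on the $X_1$-side and the extraction fails. The minimality argument itself is uniform over the two cases of condition 4 and does not use condition 5. Note also that the paper keeps a finite minimal generating set $S_A\subset A$ as a separate part of $S$ rather than absorbing the $A$-generators into the wreath-product elements; this makes the minimality of the $A$-part immediate via the projection $G\to A$ and keeps the base-group bookkeeping clean.
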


\subsection*{Proof of Theorem \ref{tm_2011_100}}

At first we fix some notations.

Let $G= (A,X)\wr H$ and let $H_0$ be a normal subgroup of $H$. We
assume that $G,$ $(A,X)$, $H$, and $H_0$ satisfy the conditions of
Theorem~\ref{tm_2011_100}. Let $X=\{ {0}, {1},\ldots,
 {n}\}$, $X_1=\{ {0}, {1},\ldots,  {l_1}\}$, and
$X_2=\{ {l_2}, {l_2+1},\ldots,  {n}\}$.

The group $G$ is a semidirect product of its subgroups $A$ and
$K$, where  $K$ is the direct product of $n+1$ copies of $H$,
i.e., $K=\underbrace{H\times \ldots \times H}_{n+1}$. We will also
write whole subgroup $K$ as $(\underbrace{H,\ldots,H}_{n+1})$. The
conjugation of $g=(g_0,\ldots,g_{n})$ by an element of $(A,X)$ is
a corresponding permutation of coordinates of the tuple.

Without loss of generality, we will make the following
assumptions:

 If there exists $a\in A$
such that $a(X_1)\cap X_2\ne
\emptyset$ and $a(X_1)\nsubseteq X_2$ then let $d_1\in A$ be such that $d_1(
{0})\notin X_2$ and $d_1( {1})= {n}$.

Otherwise, if $a(X_1)\cap X_2\ne
\emptyset$ implies that $a(X_1)\subseteq X_2$ for all $a\in A$
then $|X_1|\ge 3$ by the condition {\bf PS}.  In this case let
$d_2\in A$ be such that $d_2(0)=n$, $d_2(1)=n-1$, and
$d_2(2)=n-2$.

By the L-condition there is a minimal generating set
$\bar{F}=\{\bar{f}_i\ |\ i
\in {\ii}\}$ of $H/H_0$, where $\ii$ denotes a set of indices.
Let $\psi : H\arr H/H_0$ be the canonical epimorphism.
 For every $i\in \ii$ we fix some element $f_i\in
H$ such that $\psi(f_i)=\bar{f_i}$. Denote
$$F=\{f_i
\ |   i\in \ii\}.$$

Let $\ii_1$ and $\ii_2$ be subsets of $\ii$ with the following
properties:
\begin{itemize}
\item
$|\ii_2|=|\ii|.$
\item
$\ii_1=\ii\setminus \ii_2$.
\end{itemize}

Denote $$F^{\ii_j}=\{f_i \ |   i\in \ii_j\}\ \text{for} \ j=1,2.$$

In the case of $H'\lneq H_0<H^kH'$ due to condition 5 of the
L-condition we can assume that for every $i
\in
\ii_2$ the following equality holds: $f_i^k=e$. In the case of
$H_0<H'$ we can assume that $\ii_2=\ii$.

Since $\bar{F}$ is an infinite set  the set of the finite words
over $\bar{F}$ has the same cardinality as $\bar{F}$. By the
L-condition $|H/H_0|\ge |H_0|.$ It follows that
$$|\ii_2|=|\ii|=|H/H_0|\ge |H_0|.$$

Therefore we can fix a surjection $\phi: \ii_2
\arr H_0$. We also define the set of elements of $G$:
$$S_K=\{q_{i}=(f_i,e,\ldots,e,\phi(i))
\ | \ i\in \ii_2\}\cup \{q_{i}=(f_i,e,\ldots,e)
\ | \ i\in \ii_1\}.$$

Let us fix a minimal generating set of $A$:
$S_A=\{s_1,s_2,\ldots,s_r\}$. Let $S =S_K \cup S_A$ and $N=\langle
S
\rangle$.
Note that $A=\langle S_A \rangle $ is contained in $N$.

\begin{lemma}
\label{l014_211}
The set $S$ is a minimal generating set of the group $N$.
\end{lemma}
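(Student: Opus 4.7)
The plan is to verify directly that for every $s \in S$ we have $s \notin \langle S \setminus \{s\}\rangle$, which is exactly the minimality requirement. The generators of $S$ split into two disjoint families, $S_A$ and $S_K$, and I would treat them by projecting $G$ onto two different quotients tailored to each.

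For an element $s_j \in S_A$, I would use the canonical projection $G = A \ltimes K \tto A$. Every $q_i$ lies in $K$ and hence maps to the identity, so the image of $\langle S \setminus \{s_j\}\rangle$ coincides with $\langle S_A \setminus \{s_j\}\rangle$. Since $S_A$ was fixed as a minimal generating set of the finite group $A$, this image is a proper subgroup of $A$ and in particular does not contain $s_j$; hence $s_j \notin \langle S \setminus \{s_j\}\rangle$.

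For an element $q_i \in S_K$, I would instead pass to the quotient $G/H_0^X$. This is well-defined because $A$ permutes the coordinates of $K = H^X$ and leaves $H_0^X$ setwise invariant, so $H_0^X \trianglelefteq G$; the quotient is naturally identified with the wreath product $(A,X) \wr (H/H_0)$. Under this quotient every $q_i$ maps to $(\bar f_i,e,\ldots,e)$: this is immediate for $i \in \ii_1$, and for $i \in \ii_2$ it uses the crucial fact that $\phi(i) \in H_0$. Any element of $\langle S \setminus \{q_i\}\rangle$ therefore has image in the subgroup $B$ of $(A,X) \wr (H/H_0)$ generated by $A$ together with $\{(\bar f_j,e,\ldots,e) : j \ne i\}$. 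Conjugating by $A$, which is transitive on $X$, places each $\bar f_j$ in every coordinate, and distinct coordinates commute in the base group $(H/H_0)^X$, so $B = \langle \bar f_j : j \ne i\rangle^X \rtimes A$. Because $\bar F$ is a minimal generating set of $H/H_0$, we have $\bar f_i \notin \langle \bar f_j : j \ne i\rangle$, so the image of $q_i$ is outside $B$, and consequently $q_i \notin \langle S \setminus \{q_i\}\rangle$.

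The only point that requires mild care is the identification $G/H_0^X \cong (A,X) \wr (H/H_0)$ and the description of how conjugation by $A$ distributes the nontrivial entry $\bar f_j$ across all coordinates; given the explicit semidirect-product description of $G$, both are routine. Otherwise the lemma is a direct consequence of the minimality of $S_A$ in $A$ and of $\bar F$ in $H/H_0$, channelled through two judiciously chosen quotients, and I do not foresee any further obstacle.
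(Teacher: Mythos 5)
Your proof is correct and follows essentially the same route as the paper: the $S_A$ case is handled by the projection $G \tto A$ together with minimality of $S_A$, and the $S_K$ case reduces to the minimality of $\bar F$ in $H/H_0$ after killing the $H_0$-entries. The only cosmetic difference is that you package the second step as the quotient $G/H_0^X \cong (A,X)\wr(H/H_0)$ and describe the subgroup generated by the images, whereas the paper rewrites a hypothetical expression for $q_i$ as a product of $A$-conjugates and reads off the $0$-th coordinate modulo $H_0$; the underlying computation is identical.
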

\begin{proof}
Since $G$ is the semidirect product $A\rightthreetimes K$ and
$S_K\subset K$, any element $s$ of $S_A$ cannot be written as an
product of  elements of $S\setminus \{s\}$ and their inverses.

Further, suppose, contrary to our claim, that the element $q_i$
for some $i\in \ii$ is a product of elements of $S\setminus
\{q_{i}\}$ and their inverses. It is easy to check that this decomposition of
$q_i$ can be transformed to the product of the form
$q_i=(q_{i_1}^{\epsilon_1})^{a_{1}}\ldots
(q_{i_m}^{\epsilon_m})^{a_{m}}$, where $i_1,\ldots, i_m \in
\ii\setminus\{i\}$, $\epsilon_1,\ldots,\epsilon_m\in \{-1,1\}$ and $a_1,\ldots,a_m\in A$. Consider
 $0$-th coordinate of $q_i$. We have that
$f_i$ is a product of elements of $F\setminus
\{f_i\}$, their inverses and elements of $H_0$.
Applying $\psi$ we conclude that $\bar{f}_i$ is a product of
elements of $\bar{F}\setminus \{\bar{f}_i\}$ and their inverses.
This contradicts to the fact that  $\bar{F}$ is a minimal
generating set of the quotient $H/H_0$.
\end{proof}

We next show that the set $S$ is a generating set of $G$, i.e., we
next show that $N=G$.

\begin{lemma}
\label{l0165_11}
For every $g\in \lc F\rc$ the elements
$u_{n-2}=(e,\ldots,e,{g},e,g^{-1})$ and
$u_{n-1}=(e,\ldots,e,{g},g^{-1})$ are contained in $N$.
\end{lemma}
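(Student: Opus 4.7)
The plan is to verify the lemma first for the generators $g = f_i$ themselves and then bootstrap to arbitrary $g \in \langle F\rangle$ by simultaneous induction on the word length of $g$ in $F$.

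For the base case, when $i \in \ii_1$ the element $q_i = (f_i, e, \ldots, e) \in N$ has support only at position $0$. By transitivity of $(A, X)$ and the containment $A \leq N$, conjugating $q_i$ and $q_i^{-1}$ by suitable elements of $A$ places $f_i$ (respectively $f_i^{-1}$) at any single coordinate one wishes, and the appropriate two-factor products then deliver $u_{n-1}$ and $u_{n-2}$ for $g = f_i$. When $i \in \ii_2$, the generator $q_i = (f_i, e, \ldots, e, \phi(i))$ carries the additional entry $\phi(i)$ at position $n$, which must be disposed of. Here I exploit that $A_1$ acts trivially on $X \setminus X_1$ and in particular fixes position $n$: choosing $\alpha \in A_1$ with $\alpha(0) = j \in X_1 \setminus \{0\}$ (available since $(A_1, X_1)$ is transitive and $|X_1| \ge 2$), the product $q_i^{-1}\bigl(\alpha q_i \alpha^{-1}\bigr)$ cancels $\phi(i)$ at position $n$ and equals the two-coordinate element $w_{ij} := (f_i^{-1}, e, \ldots, f_i \text{ at } j, \ldots, e) \in N$.

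Transporting $w_{ij}$ to the target pair of positions is where the dichotomy of the condition PS enters. In the $d_2$-branch ($|X_1| \geq 3$), conjugation by $d_2$ sends positions $0, 1, 2$ to $n, n-1, n-2$, so $d_2 w_{i,1} d_2^{-1}$ and $d_2 w_{i,2} d_2^{-1}$ are immediately $u_{n-1}$ and $u_{n-2}$ for $g = f_i$. In the $d_1$-branch, let $v_1 := d_1 w_{i,1} d_1^{-1}$, which has $f_i^{-1}$ at $m := d_1(0) \notin X_2$ and $f_i$ at position $n$; since any $\beta \in A_2$ fixes $m$, conjugating $v_1$ by $\beta$ with $\beta(n) = n-1$ yields $v_2$ with $f_i^{-1}$ at $m$, $f_i$ at $n-1$, and trivial entry at $n$. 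Then $v_2 v_1^{-1}$ cancels the $m$-coordinate and equals $u_{n-1}$ for $g = f_i$, and the analogous choice $\beta'(n) = n-2$ yields $u_{n-2}$.

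For the inductive step, write $g = g' f_i^{\pm 1}$ with $g'$ shorter. A direct coordinate-wise comparison gives the identity
\[
u_{n-1}(g' f_i^{\pm 1}) = \bigl(e,\ldots,e,[f_i^{\mp 1}, g'^{-1}]\bigr) \cdot u_{n-1}(g') \cdot u_{n-1}(f_i^{\pm 1}),
\]
and the analogous identity for $u_{n-2}$, where the correction factor carries $[f_i^{\mp 1}, g'^{-1}]$ at position $n$ and trivial entries elsewhere. Its membership in $N$ is secured by the commutator identity
\[
[u_{n-1}(f_i^{\pm 1}), u_{n-2}(g')] = \bigl(e,\ldots,e,[f_i^{\mp 1}, g'^{-1}]\bigr),
\]
again checked coordinate-wise, whose inputs lie in $N$ by the base case (for $f_i^{\pm 1}$) and by the inductive hypothesis applied to $g'$. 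This closes the simultaneous induction on $u_{n-1}$ and $u_{n-2}$.

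The principal obstacle is the $\ii_2$-subcase of the base step: removing the junk entry $\phi(i)$ at position $n$ and relocating the surviving pair into coordinates $(n-2,n)$ and $(n-1,n)$ forces the case split on whether PS supplies $d_1$ or $d_2$, and in the $d_1$-branch it requires the extra $A_2$-conjugation because $d_1$ alone does not place both surviving entries inside $X_2$.
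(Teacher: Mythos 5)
Your proof is correct and rests on the same two key mechanisms as the paper's: cancelling the junk entry $\phi(i)$ at position $n$ by conjugating with an element of $A_1$ (which fixes $n$) and then transporting the resulting two-coordinate element into $X_2$ via the $d_1$-plus-$A_2$ or $d_2$ dichotomy of condition \textbf{PS}. The only difference is that the paper applies this conjugation trick to the whole product $q_{i_1}^{\epsilon_1}\cdots q_{i_m}^{\epsilon_m}$ at once, obtaining $(g^{-1},e,\ldots,e,g,e,\ldots,e)$ directly for arbitrary $g\in\lc F\rc$, which makes your word-length induction and the commutator correction $[u_{n-1}(f_i^{\pm 1}),u_{n-2}(g')]$ unnecessary.
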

\begin{proof}
The element $g$ can be decomposed into the product of elements of
$F$ and their inverses: $g=f_{i_1}^{\epsilon_1}\ldots
f_{i_m}^{\epsilon_m}$, where $\epsilon_1,\ldots,\epsilon_m\in
\{-1,1\}$. For every $j\in X_1
\setminus\{0\}$ choose  $b_j\in A_1$ such that $b_j( {0})= {j}$. Then
$$t_{j}=b_j^{-1}q_{i_1}^{\epsilon_1}\ldots q_{i_m}^{\epsilon_m}b_j (q_{i_1}^{\epsilon_1}\ldots
q_{i_m}^{\epsilon_m})^{-1}=(g^{-1},e,\ldots,e,g,e,\ldots,e)\in
N,$$ where $g$ is located on $j$-th coordinate of the tuple.

We consider all possible cases depending on the group $A$. We will
use here the elements $d_1$ and $d_2$ which were defined at the
beginning of the proof of the theorem.
\begin{enumerate}
\item
There is  $a\in A$ such that $a(X_1)\cap X_2\ne \emptyset$ and
$a(X_1)\nsubseteq X_2$. For every $m \in \{n-2,n-1\}$ choose
$c_m\in A_2$ such that $c_m(n)=m$. Then
$t_{1}^{d_1}(t_{1}^{d_1c_m})^{-1}=u_m\in N$ for $m=n-2, n-1$.
\item
For all $a\in A$, the inequality $a(X_1)\cap X_2\ne \emptyset$
implies that $a(X_1)\subseteq X_2$. Then $|X_1|\ge 3$ and elements
$u_{n-1}=t_1^{d_2}$ and $u_{n-2}=t_2^{d_2}$ are contained in $N$.
\end{enumerate}
\end{proof}

\begin{lemma}
\label{l018_1111}
The subgroup $(E,\ldots,E,H')$ of $K$ is contained in $N$.
\end{lemma}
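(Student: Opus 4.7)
The plan is to define the set
$$M := \{h \in H : (e,\ldots,e,h) \in N\},$$
write $R_n(h) := (e,\ldots,e,h) \in K$ (which defines a group homomorphism $R_n : H \to K$), and prove that $H' \subseteq M$. Since $R_n$ is a homomorphism, $M$ is automatically a subgroup of $H$. The proof has two main parts: verifying that $M$ is \emph{normal} in $H$, and checking that $M$ contains every commutator $[x,y]$ with $x,y \in F \cup H_0$. Because $H_0$ is normal in $H$ and $\langle F \rangle$ surjects onto $H/H_0$, the set $F \cup H_0$ generates $H$, so the standard fact that $H'$ is the normal closure in $H$ of the commutators of a generating set then forces $H' \subseteq M$.

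To see normality, take $g \in \langle F \rangle$; Lemma~\ref{l0165_11} gives $u_{n-1}(g) \in N$, and a coordinate-wise check yields $u_{n-1}(g)\, R_n(h)\, u_{n-1}(g)^{-1} = R_n(g^{-1} h g)$. For $h_0 \in H_0$, surjectivity of $\phi : \ii_2 \to H_0$ furnishes $i \in \ii_2$ with $\phi(i) = h_0$, and an analogous calculation shows $q_i\, R_n(h)\, q_i^{-1} = R_n(h_0 h h_0^{-1})$ (the $f_i$ at position $0$ commutes with the trivial entry of $R_n(h)$ at that coordinate). Thus both $\langle F \rangle$ and $H_0$ normalize $M$, hence so does $H = \langle F \rangle H_0$.

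For the commutators I distinguish three cases. \textbf{(i)} For $f_i, f_j \in F$, the identity
$$[u_{n-2}(f_i^{-1}),\, u_{n-1}(f_j^{-1})] = R_n([f_i, f_j])$$
is immediate from the coordinate forms of $u_{n-2}, u_{n-1}$. \textbf{(ii)} For $f_j \in F$ and $h_0 = \phi(i) \in H_0$, the calculation $[q_i,\, u_{n-1}(f_j^{-1})] = R_n([h_0, f_j])$ puts $[h_0, f_j]$ (and thus $[f_j, h_0]$) into $M$. \textbf{(iii)} For $h_0 = \phi(i),\, h_0' = \phi(j) \in H_0$, choose $b \in A_1$ with $b(0) \ne 0$; this exists by the transitivity of $A_1$ on $X_1$ together with $|X_1| \ge 2$. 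Since $b \in A_1$ fixes $X \setminus X_1$ pointwise, and $n \in X_2 \subseteq X \setminus X_1$, the conjugate $b q_j b^{-1} \in N$ has $f_j$ at some coordinate $p \in X_1 \setminus \{0\}$ and $\phi(j)$ still at coordinate $n$. The supports $\{0, n\}$ of $q_i$ and $\{p, n\}$ of $b q_j b^{-1}$ meet only at $n$, so $[q_i, b q_j b^{-1}] = R_n([h_0, h_0'])$.

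The main technical obstacle is case \textbf{(iii)}: obtaining $R_n$ of a pure $H_0$-commutator requires moving the $0$-coordinate of one $q$ aside without disturbing the $n$-coordinate, and this is precisely what conjugation by an element of $A_1$ (rather than an arbitrary element of $A$) accomplishes. The remaining steps then follow routinely from commutator identities in the direct product $K$ and from the normal-closure description of $H'$.
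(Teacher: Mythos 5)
Your proof is correct, and the engine driving it is the same one the paper uses: produce two elements of $N$ whose supports in $K$ meet only in coordinate $n$, so that their commutator collapses to a single nontrivial entry at coordinate $n$. But the architecture is genuinely different. The paper multiplies $u_{n-2}(g_1^{-1})$ by $q_{i_1}$ and $u_{n-1}(g_2^{-1})$ by $q_{i_2}^a$ (with $a\in A_1$, $a(0)=1$) to place two \emph{arbitrary} elements $h_1=g_1\phi(i_1)$ and $h_2=g_2\phi(i_2)$ of $H$ at coordinate $n$, with the auxiliary entries spread over the four distinct coordinates $0$, $1$, $n-2$, $n-1$; a single commutator computation then yields $(e,\ldots,e,[h_1,h_2])$ for all $h_1,h_2\in H$, and these elements generate $(E,\ldots,E,H')$ outright, with no further argument. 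You instead realize only the commutators of the generating set $F\cup H_0$, split into three cases according to the type of each factor, and must then pay for this with two extra steps: the verification that $M=\{h : (e,\ldots,e,h)\in N\}$ is normal in $H$ (itself using Lemma~\ref{l0165_11} and the $q_i$), and the appeal to the fact that $H'$ is the normal closure of the commutators of a generating set. Both routes use the same ingredients (Lemma~\ref{l0165_11}, the surjection $\phi$, conjugation by an element of $A_1$ moving $0$ within $X_1$ while fixing $X_2$ pointwise, and the decomposition $H=\langle F\rangle H_0$); the paper's version is shorter because the factorization $h=g\phi(i)$ absorbs your case analysis into one uniform construction, while yours is more modular and makes explicit exactly which commutators must be manufactured by hand. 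All of your individual computations check out, so this is a valid alternative proof rather than a gap.
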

\begin{proof}
Let $h_1,h_2\in H$. Then there exist $g_j\in
\lc F\rc$,  $i_j \in \ii_2$ for $j=1,2$ such that $h_j=g_j\phi(i_j)$.
By the construction, the set $S$ contains elements
 $q_{i_j}=(f_{i_j},e,\ldots,e,\phi(i_j))$ for $j=1,2$.
Let $a\in A_1$ be such that $a(0)=1$. Then
$q_{i_2}^a=(e,f_{i_2},e,\ldots,e,\phi(i_2))\in N$. By
Lemma~\ref{l0165_11} elements $t_1=(e,\ldots,e , g_1^{-1} ,e,g_1)$
and $t_2=(e,\ldots,e , g_2^{-1} ,g_2)$ are contained in $N$.
Therefore $h_1'=t_1q_{i_1}=(f_{i_1},e,\ldots,e , g_1^{-1}
,e,g_1\phi(i_1))$ and $h_2'=t_2q_{i_2}^a=(e,f_{i_2},e,\ldots,e ,
g_2^{-1} ,g_2\phi(i_2))$ are contained in $N$. Hence
$h_1'h_2'{h_1'}^{-1}{h_2'}^{-1}=(e,\ldots,e ,
h_1h_2{h_1}^{-1}{h_2}^{-1})\in N$. Thus $(E,\ldots,E,H')<N$.
\end{proof}

\begin{lemma}
\label{l018_112}
If $H'\lneq H_0<H^kH'$ then $(E,\ldots,E,H^k)<N$.
\end{lemma}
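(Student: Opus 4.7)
The plan is to exhibit enough elements of $N$ supported at the last coordinate to generate the subgroup $(E,\ldots,E,H^k)$. Since Lemma~\ref{l018_1111} already places $(E,\ldots,E,H')$ inside $N$ and $H/H'$ is abelian, it suffices to show that $(e,\ldots,e,h^k) \in N$ modulo $(E,\ldots,E,H')$ for every $h \in H$.

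First I would use the generators $q_i$ with $i \in \ii_2$. Since $f_i^k = e$ by hypothesis and different coordinates of $K$ commute, $q_i^k = (e,\ldots,e,\phi(i)^k) \in N$, and because $\phi$ surjects $\ii_2$ onto $H_0$, this yields $(e,\ldots,e,h_0^k) \in N$ for every $h_0 \in H_0$. For $i \in \ii_1$, the element $q_i^k = (f_i^k,e,\ldots,e)$ can be transported to the last coordinate by conjugating with any $a \in A$ satisfying $a(0)=n$, which exists by transitivity of $(A,X)$; this produces $(e,\ldots,e,f_i^k) \in N$.

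To combine these, I would write an arbitrary $h \in H$ as $h = g h_0$ with $g \in \lc F \rc$ and $h_0 \in H_0$, possible because $\bar F$ generates $H/H_0$. Expanding $g = f_{i_1}^{\epsilon_1}\cdots f_{i_m}^{\epsilon_m}$ and passing to the abelian quotient $H/H'$,
$$h^k \equiv \prod_{j=1}^m f_{i_j}^{k\epsilon_j} \cdot h_0^k \pmod{H'}.$$
Each factor on the right is already present in $N$ at the last coordinate by the previous paragraph: the $\ii_2$-factors are trivial since $f_{i_j}^k = e$, the $\ii_1$-factors come from $(e,\ldots,e,f_{i_j}^k)^{\pm 1}$, and the $h_0^k$-factor was produced above. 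The $H'$-correction is absorbed by Lemma~\ref{l018_1111}, so $(e,\ldots,e,h^k) \in N$.

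I do not anticipate a genuine obstacle here: the hypothesis $H_0 < H^k H'$ enters precisely through clause 5 of the L-condition, which allowed us to arrange $f_i^k = e$ for $i \in \ii_2$, and this is exactly what makes $q_i^k$ collapse onto the last coordinate. The remaining ingredients---transitivity of $(A,X)$, surjectivity of $\phi$ onto $H_0$, and the commutator content of Lemma~\ref{l018_1111}---are all in place.
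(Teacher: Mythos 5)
Your argument is correct, but it takes a genuinely different route from the paper's. The paper constructs, for each $h\in H$, a single element of $N$ whose last coordinate is exactly $h$: it refines the decomposition to $h=g_1g_2\phi(i)$ with $g_1\in\lc F^{\ii_1}\rc$ and $g_2\in\lc F^{\ii_2}\rc$ (using $H'<H_0$ to pick $i\in\ii_2$ so that $\phi(i)$ absorbs both the commutator part and $h_0$), assembles $h'=(f_i,e,\ldots,e,g_2^{-1},h)$ from $q_i$, the element $t_2=(e,\ldots,e,g_2^{-1},g_2)$ supplied by Lemma~\ref{l0165_11}, and a conjugate of $(g_1,e,\ldots,e)$, and then computes ${h'}^k=(e,\ldots,e,g_2^{-k},h^k)$, cancelling the residual $g_2^{-k}\in H'$ at coordinate $n-1$ by Lemma~\ref{l018_1111}. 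You never build an element with last coordinate $h$; instead you pass to the abelianization $H/H'$, write $h^k\equiv\prod_j f_{i_j}^{k\epsilon_j}\cdot h_0^k \pmod{H'}$, and realize each factor separately at the last coordinate ($q_i^k$ for $i\in\ii_2$ lands $\phi(i)^k$ there directly, and conjugates of $q_i^k$ for $i\in\ii_1$ supply $f_i^k$ there), absorbing the $H'$-discrepancy by Lemma~\ref{l018_1111}. What your version buys is economy: it dispenses with Lemma~\ref{l0165_11}, with the decomposition $g=g_1g_2h_1$, and with the auxiliary permutation $a_1\in A_2$ moving coordinate $n-1$ to $n$. Both arguments ultimately rest on the same two facts --- that $f_i^k=e$ for $i\in\ii_2$ (clause 5 of the L-condition) and that $(E,\ldots,E,H')<N$ --- so your proof is a legitimate, somewhat more elementary alternative.
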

\begin{proof}
If $H'\lneq H_0<H^kH'$ then for every $i
\in
\ii_2$ the following equality holds: $f_i^k=e$.

Let $h
\in H$. Then $h=gh_0$ for some $g\in \lc F\rc$ and $h_0\in H_0$.
Since $H_0>H'$ and $F^{\ii_1}\cup F^{\ii_2} = F$ there exist
$g_1\in \lc F^{\ii_1}\rc$, $g_2\in
\lc F^{\ii_2}\rc$, and $h_1\in H'$ such that $g=g_1g_2h_1$.
Since $H_0>H'$ there exists $i \in \ii_2$ satisfying
$\phi(i)=h_1h_0$. Thus we have $h=g_1g_2\phi(i)$. By the
construction, the set $S$ contains the element
$q_{i}=(f_{i},e,\ldots,e,\phi(i))$.  By Lemma~\ref{l0165_11}
element $t_2=(e,\ldots,e , g_2^{-1} ,g_2)$ is contained in $N$.
Let $a\in A$ be such that $a(0)=n$. Note that element
$t_1=a^{-1}(g_1,e,\ldots,e)a=(e,\ldots,e,g_1)$ is contained in
$N$. Therefore $h'=t_1t_2q_i =(f_{i},e,\ldots,e ,
g_2^{-1},g_1g_2\phi(i))$ is contained in $N$. By the condition of
the lemma there is $h_2\in H'$ such that $g_2^p=h_2$. Let $a_1\in
A_2$ be such that $a_1(n-1)=n$. Then $(e,\ldots,e,h_2,e)=
a_1^{-1}(e,\ldots,e,h_2)a_1\in N$ by Lemma~\ref{l018_1111}.
Therefore ${h'}^k(e,\ldots,e,h_2,e)=(e,\ldots,e , e, h^k)\in N$.
Thus $(E,\ldots,E,H^k)<N$.
\end{proof}

\begin{lemma}
\label{l02111}
$N=G$.
\end{lemma}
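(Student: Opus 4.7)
The plan is to show $K < N$; combined with $A \subset N$ and the decomposition $G = A \rightthreetimes K$, this yields $N = G$. Since $A$ acts transitively on $X$ and is already contained in $N$, it suffices to produce $(E,\ldots,E,H)$ in the last coordinate: conjugation by appropriate elements of $A$ will then place a copy of $H$ on every coordinate, giving the full base group $K = H^{n+1}$.

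The pivotal intermediate step is to show that $(E,\ldots,E,H_0) < N$. If $H_0 \le H'$ (the first alternative of the L-condition), this is immediate from Lemma \ref{l018_1111}. In the case $H' \lneq H_0 < H^kH'$, the normality of $H'$ ensures that $H^kH'$ is a subgroup, namely the join of $H^k$ and $H'$. Lemmas \ref{l018_1111} and \ref{l018_112} then combine to give $(E,\ldots,E,H^kH') < N$, and hence $(E,\ldots,E,H_0) < N$.

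With $H_0$ available on the last coordinate, I would next clean up the generators $q_i$. For $i \in \ii_2$, multiplying $q_i = (f_i,e,\ldots,e,\phi(i))$ by $(e,\ldots,e,\phi(i))^{-1} \in N$ produces $(f_i,e,\ldots,e) \in N$; for $i \in \ii_1$, the generator $q_i = (f_i,e,\ldots,e)$ is already of this form. Conjugating by any $a \in A$ sending $0$ to $n$ then places $f_i$ on the last coordinate, and these elements, ranging over $i \in \ii$, generate $(E,\ldots,E,\lc F\rc)$ inside $N$. Since $\psi(\lc F\rc) = H/H_0$, we have $H = \lc F\rc \cdot H_0$, so together with the previous step $(E,\ldots,E,H) < N$, and spreading by conjugation with $A$ finishes the proof.

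The only mildly delicate point is the case $H' \lneq H_0 < H^kH'$, where one must invoke normality of $H'$ to identify the join of the two "last coordinate copies" coming from Lemmas \ref{l018_1111} and \ref{l018_112} with $(E,\ldots,E,H^kH')$; the rest of the argument is a short chain of elementary manipulations inside the semidirect decomposition.
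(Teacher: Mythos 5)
Your proposal is correct and follows essentially the same route as the paper: establish $(E,\ldots,E,H_0)<N$ from Lemmas~\ref{l018_1111} and~\ref{l018_112} (splitting on the two alternatives of condition~4 of the L-condition), strip the $\phi(i)$ tails from the generators $q_i$ to get $(F,E,\ldots,E)\subset N$, move $F$ to the last coordinate by conjugation, use $\lc F\rc H_0=H$, and spread $H$ to all coordinates by transitivity of $(A,X)$. Your explicit remark that normality of $H'$ is needed to identify the join of the two last-coordinate subgroups with $(E,\ldots,E,H^kH')$ is a detail the paper leaves implicit, but it is the same argument.
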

\begin{proof}
If $H_0<H'$ then $(E,\ldots,E,H_0)< N$ by Lemma~\ref{l018_1111}.
If $H_0>H'$ then the conditions of Lemma~\ref{l018_112} hold by
condition 5 of the L-condition. Thus in this case
$(E,\ldots,E,H_0)< N$ too. By construction $(F,E,\ldots, E)$ is
contained in $\lc S_K, (E,\ldots,E,H_0) \rc$. Therefore the set
$(F,E,\ldots, E)$ is contained in $N$. Let $a\in A$ be such that
$a(0)=n$. Then $a^{-1}(F,E,\ldots, E)a=(E,\ldots, E,F)\subset N$.
Since $\lc F\rc H_0=H $ we have $(E,\ldots, E,H)<N$. Also by
transitivity of $(A,X)$ we get $(H,\ldots, H)<N$. It follows that
$N=G$.
\end{proof}

Now the assertion of Theorem~\ref{tm_2011_100}  follows
immediately from Lemma~\ref{l014_211} and Lemma~\ref{l02111}.

\section{Applications and examples}

We first  give natural  constructions of groups with property {\bf
PS}.

\begin{prop}
\label{pr_1}
The following groups satisfy {\bf PS}:
\begin{enumerate}
\item
The symmetric group of degree $m\ge 5$.
\item
The permutational wreath product $(B_1,Y_1)\wr (B_2,Y_2)$, where
$(B_1,Y_1)$ and $ (B_2,Y_2)$ are finite transitive permutation
groups and $|Y_1|\ge 2$, $|Y_2|\ge 3$.
\item
The permutational wreath product  $(B_1,Y_1)\wr (B_2,Y_2)\wr
(B_3,Y_3)$, where $(B_i,Y_i)$ is finite transitive permutation
group and $|Y_i|\ge 2$ for $i=1,2,3$.
\end{enumerate}
\end{prop}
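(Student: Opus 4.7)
The plan is to verify condition \textbf{PS} in each case by explicitly producing the required subsets $X_1,X_2\subseteq X$ and subgroups $A_1,A_2\le A$; finiteness and transitivity are obvious in all three cases. The subgroups $A_1,A_2$ will always be fiberwise acting copies of base groups in a wreath decomposition, except in case (1) where they are symmetric subgroups on disjoint point sets.

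For (1), taking $S_m$ on $\{1,\ldots,m\}$ with $m\ge 5$, set $X_1=\{1,2\}$, $X_2=\{3,4,5\}$, and let $A_j=\Sym(X_j)$ act on $X_j$ and fix the complement pointwise. Disjointness and the size conditions are immediate. Since $|X_1|=2$, the last bullet of \textbf{PS} must be checked: the transposition $a=(2\;3)$ sends $X_1$ to $\{1,3\}$, which meets $X_2$ at $3$ and contains $1\notin X_2$, witnessing the clause.

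For (2) and (3), the key observation is that in the imprimitive action of a wreath product $(A,Y)\wr (H,Z)$ on $Y\times Z$, the base group $H^Y$ acts fiberwise: the copy of $H$ at coordinate $y\in Y$ acts on $\{y\}\times Z$ via the given representation of $H$ and fixes every other fiber pointwise. Applying this to (2), pick distinct $y,y'\in Y_1$ (possible since $|Y_1|\ge 2$) and let $A_1,A_2$ be the base copies of $B_2$ at $y,y'$, with $X_1=\{y\}\times Y_2$ and $X_2=\{y'\}\times Y_2$; transitivity of $(B_2,Y_2)$ yields transitivity on each fiber, and $|X_j|=|Y_2|\ge 3$ makes the last bullet of \textbf{PS} vacuous. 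For (3), use associativity to write the group as $(B_1,Y_1)\wr\bigl((B_2,Y_2)\wr(B_3,Y_3)\bigr)$ acting on $Y_1\times Y_2\times Y_3$; take $A_1,A_2$ as the base copies of $(B_2,Y_2)\wr(B_3,Y_3)$ at distinct $y,y'\in Y_1$ and $X_1,X_2$ as the fibers $\{y\}\times Y_2\times Y_3$ and $\{y'\}\times Y_2\times Y_3$, noting that $(B_2,Y_2)\wr(B_3,Y_3)$ is transitive on $Y_2\times Y_3$ because both factors are transitive, and that $|X_j|=|Y_2|\cdot|Y_3|\ge 4$.

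There is no real obstacle: the whole proposition reduces to correctly unpacking the imprimitive wreath-product action so that base copies act fiberwise and trivially off their own fiber, and then picking two distinct fibers as $X_1,X_2$. The only subtle point is in (1), where $m\ge 5$ is exactly the threshold at which disjoint subsets of sizes $2$ and $3$ exist, and the extra clause for $|X_1|=2$ is easy to satisfy with a single transposition.
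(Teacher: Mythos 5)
Your argument is correct: in each case you exhibit the required $X_1,X_2,A_1,A_2$ and check every clause of \textbf{PS}, including the extra clause for $|X_1|=2$ in the symmetric-group case, and your reading of the wreath product (base group $B_2^{Y_1}$ acting fiberwise on $Y_1\times Y_2$, so that there are $|Y_1|\ge 2$ fibers each of size $|Y_2|\ge 3$) matches the paper's convention $(A,X)\wr H=(A,X)\rightthreetimes H^X$. The paper states Proposition~\ref{pr_1} without proof, and your verification is exactly the natural one that the author presumably had in mind.
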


Now we formulate two corollaries from Theorem~\ref{tm_2011_100}
which are more applicable.

\begin{prop}
\label{tm_2011_1121}
Let $G= (A,X)\wr H$ and there is an integer $k>1$ with the
following properties:
\begin{enumerate}
\item
$(A,X)$ satisfies {\bf PS}.
\item
$H$ is an infinite group.
\item
$H'>H^k$.
\item
$|H'|\le |H/H'|$.
\end{enumerate}
Then the group $G$ satisfies the L-condition.
\end{prop}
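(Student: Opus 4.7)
The plan is to take $H_0 = H'$ and verify the five clauses of the L-condition one by one. Clause 1 is part of the hypotheses. With $H_0 = H'$ we are in the first case of clause 4 (namely $H_0 \le H'$), which makes clause 5 vacuous. Clause 3 becomes $|H/H'| \ge |H'|$, which is exactly hypothesis 4 of the proposition. So the entire content of the proof is to establish clause 2: the quotient $H/H'$ admits an infinite minimal generating set.

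To establish clause 2, I would first observe that $H/H'$ is abelian of exponent dividing $k$: since $H^k \le H'$, every $k$th power is trivial modulo $H'$. Next I would argue that $H/H'$ is infinite. Indeed, if $H/H'$ were finite then the inequality $|H'| \le |H/H'|$ would force $H'$, and hence $H = H' \cdot (H/H')$, to be finite as well, contradicting hypothesis 2.

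The main technical step is then an appeal to the Pr\"ufer--Baer structure theorem for abelian groups of bounded exponent: any such group decomposes as a direct sum $\bigoplus_{i\in I} C_i$ of finite cyclic groups, each of order dividing $k$. Since $H/H'$ is infinite and each $C_i$ is finite, the index set $I$ is infinite. Choosing a generator $\bar h_i$ of each summand $C_i$ produces a generating set $\{\bar h_i : i \in I\}$ of $H/H'$; minimality is immediate from the direct sum decomposition, because projection onto the $i$th summand kills every $\bar h_j$ with $j \ne i$ but not $\bar h_i$ itself. This yields the required infinite minimal generating set.

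The only real obstacle is the invocation of the Pr\"ufer--Baer decomposition; the rest is a routine translation between the numerical hypotheses of the proposition and the clauses of the L-condition. One should also note in passing that bounded exponent is essential here, since general infinite abelian groups (for example $\mathbb{Q}$) need not admit any minimal generating set.
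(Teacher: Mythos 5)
Your proof is correct and follows essentially the same route as the paper: both reduce the statement to showing that $H/H'$, an abelian group of exponent dividing $k$, has an infinite minimal generating set, which the paper obtains by citing Ru\v{z}i\v{c}ka's results on abelian groups of bounded exponent and you obtain by invoking the Pr\"ufer--Baer decomposition directly. Your write-up is in fact slightly more careful, since you explicitly verify that $H/H'$ is infinite (so that the resulting minimal generating set is infinite), a point the paper leaves implicit.
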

\begin{proof}
Due to Theorem~\ref{tm_2011_100} we only need to show that  $H/H'$
has infinite minimal generating set. By the conditions of the
theorem  $H/H'$ has exponent $p$. We conclude from results of
\cite[Theorem 3.1 and Lemma 4.3]{ruzicka} that an abelian group of
bounded exponent has minimal generating set, and the proposition
follows.
\end{proof}

\begin{prop}
\label{tm_2011_112}
Let $G= (A,X)\wr H$ and there is an infinite subgroup $M$ of $H$
with the following properties:
\begin{enumerate}
\item
$(A,X)$ satisfies {\bf PS}.
\item
$M$ has exponent 2.
\item
$|M|=|H|$.
\item
$M\cap H^2 = E$.
\end{enumerate}
Then the group $G$ satisfies the L-condition.
\end{prop}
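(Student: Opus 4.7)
The plan is to verify the L-condition by choosing $k = 2$ and $H_0 := H^2$. First I would collect basic structural facts: $H^2$ is normal in $H$ since conjugation sends squares to squares, and every element of $H/H^2$ has order dividing $2$, so $H/H^2$ is an elementary abelian $2$-group, i.e.\ an $\mathbb{F}_2$-vector space. Since exponent-$2$ groups are abelian, $H' \le H^2$, which gives the clean identity $H^k H' = H^2 = H_0$.

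Then I would exploit the hypothesis $M \cap H^2 = E$: the canonical projection $\pi : H \to H/H^2$ is injective on $M$, so $\pi(M)$ is an $\mathbb{F}_2$-subspace of $H/H^2$ with $|\pi(M)| = |M| = |H|$. Combined with $|H/H^2| \le |H|$ this forces $|H/H^2| = |H|$, which yields $|H/H_0| \ge |H_0|$ (condition~3) and shows $H/H_0$ is infinite. Condition~2 follows because an infinite-dimensional $\mathbb{F}_2$-vector space admits a basis, which is automatically an infinite minimal generating set. Condition~4 is immediate from $H_0 = H^2 = H^kH'$: either $H^2 = H'$ (first subcase) or $H' \lneq H^2$ (second subcase), while condition~1 is given.

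The main substantive step, and the principal obstacle, is condition~5, which is needed only in the second subcase. Here I would pick a basis $B_M$ of the subspace $\pi(M) \subseteq H/H^2$ and extend it, via Zorn's lemma, to a basis $B$ of $H/H^2$. Setting $C := B_M$, each $c \in C$ is a nontrivial coset $mH^2$ with $m \in M$; since $M \cap H^2 = E$ forces $m \ne e$ and $M$ has exponent~$2$, the element $m$ has order exactly $2 = k$, supplying the required $k$-th-order representative. Using the cardinality-dimension coincidence $|V| = \dim_{\mathbb{F}_2} V$ valid for infinite $\mathbb{F}_2$-vector spaces, $|C| = \dim \pi(M) = |\pi(M)| = |H/H^2| = |H/H_0|$, completing condition~5. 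The three hypotheses on $M$ (namely $M \cap H^2 = E$, $|M| = |H|$, and exponent~$2$) conspire precisely to make this construction work: the first makes $M$ visible inside $H/H_0$, the second ensures full cardinality, and the third delivers the order-$k$ lifts.
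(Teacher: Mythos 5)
Your proof is correct and follows essentially the same route as the paper: set $H_0=H^2$, view $H/H^2$ as an $\mathbb{F}_2$-vector space, use $M\cap H^2=E$ to embed $M$ into $H/H^2$ and extend a basis of its image to a Hamel basis, which supplies the infinite minimal generating set and the order-$2$ representatives needed for condition~5. The only cosmetic difference is that you take a basis of $\pi(M)$ directly where the paper takes a minimal generating set of $M$ and pushes it forward, which amounts to the same thing.
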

\begin{proof}
Put $H_0=H^2$. Then $H/H_0$ and $M$ have minimal generating sets
(Hamel basis) as vector spaces over the field with two elements.

Let $I$ be an index set, and let $B=\{b_i\ |
\ i\in I\}$ be a minimal generating set of $M$. Let also $\psi:H\arr
H/H^2$ be the canonical epimorphism. Since $M\cap H^2 = E$ the
restriction $\psi$ onto $M$ is a bijection and the set $\psi(B)$
is a minimal generating set of $\lc
\psi(B)
\rc$.
Since $B$ is infinite we have $|\psi(B)|=|B|=|M|=|H|$. Therefore
we have $|H|=|H/H_0|$ and $\psi(B)$ can be complemented to Hamel
basis $F$ of the space $H/H_0$.
 It is also
evident that $b_i^2=e$ for every $b_i\in B$. Note that inclusion
$H'<H^2$ is always true. Thus the group $G$ satisfies the
L-condition.
\end{proof}

Remark that we use existence of Hamel basis of a vector space over
a field in the proof of Proposition~\ref{tm_2011_112}. Hence this
proof uses the axiom of choise in some cases.

In the next section we will apply Proposition~\ref{tm_2011_112} to
some groups of automorphisms of rooted trees, and particularly
give positive answer to Problem~\ref{prob_Dom}.

\subsection*{Automorphism groups of rooted trees}
We first recall necessary definition related to rooted trees and
groups acting on rooted trees. All notions which will be defined
in this section are well-known, see for
instance~\cite{grineksu_en,sidki_monogr,sid:cycl} for more
details.

Let us fix our notations. Let $\x=(\x_1,\x_2,\ldots)$ be a
sequence of finite sets $\x_i=\{ {0}, {1},\ldots, {n_i}\}$ (we
assume $n_i\ge 1$ for all $i$). Let $\x^n$ denote the set of all
words of the form $x_1x_2\ldots x_n$, where $x_i\in
\x_i$ for $i=1,\ldots,n$. Let $\x^*$ denote the set which consist of the empty word $\emptyset$ and all words of the form
 $x_1x_2\ldots x_n$, where $n\in \mathbb{N}$ and $x_i\in
\x_i$ for $i=1,\ldots,n$. Let $\x^\omega$ denote the set
of all infinite words of the form $x_1x_2\ldots$, where $x_i\in
\x_i$. We denote by ${\x}^{(k)}$ the infinite sequence $({\x}_k,{\x}_{k+1},\ldots)$.

We can consider the set of words  $\x^*$ as rooted tree $T_{\x}$
which can be defined as follows: a vertex $x_1x_2\ldots x_n$ is
adjacent to  $x_1x_2\ldots x_{n-1}$, $\emptyset$ is the root. For
rooted tree $T_{\x}$ we also define \emph{vertex subtree} $T_v$
($v\in
\x^*$) which vertices are the words of the form $v\x^*$. We call the set of vertices $\x^n$
\emph{the $n$-th level} of $T_{\x}$.

Let $\at T_{\x}$ be the group of all automorphisms of the tree
$T_{\x}$. Let $G<\at T_{\x}$. We recall definition of some
standard subgroups of $G$:
\begin{itemize}
\item
The subgroup of all elements of $G$ fixing every vertex of $n$-th
level, denoted by $\st_G(n)$, is called \emph{the stabilizer of
the $n$-th level}.
\item
For every $v\in \x^*$ the subgroup of all elements of $G$ fixing
every vertex outside the subtree $T_v$, denoted by $\rs_G (v)$, is
called \emph{the rigid stabilizer of the vertex $v$}.
\item
The group generated by the set $\bigcup_{v\in
\x^n}\tp v$,  denoted by $\Rs_G(n)$, is
called \emph{the rigid stabilizer of the $n$th level}.
\end{itemize}

Let $\at_k T_{\x}$ be the subgroup of $\at T_{\x}$ such that an
automorphism $g$ of $T_{\x}$ is in $
\at_k T_{\x} $ if and only if
the equality $g(uv)=g(u)v$ is valid for any $u\in \x^k$ and any
$v\in
\x^*$.
Note that $\at_k T_{\x}$ acts by permutations faithfully on the
set $\x^k$. Note also that $\st_{\at T_{\x}}(k)=\Rs_{\at
T_{\x}}(k)$. Therefore the group $\at T_{\x} $ can be decomposed
into semidirect product of its subgroups $\at_k T_{\x}
\rightthreetimes \Rs_{\at T_{\x}}(p)$. It follows that
 $\at T_{\x} $ is isomorphic to the permutational wreath product $ (\at_k T_{\x},\x^k)
\wr \rs_{\at T_{\x}}(v)$, where $v\in \x^k$ and
$\Rs_{\at T_{\x}}(k)$ is the base subgroup of this wreath product.

We define subgroup $M_0<\at T_{{\x}}$ as infinite direct product:
$M_0=
\prod_{i\ge 0}C_2^{(i)}$, where each $C_2^{(i)}$ is isomorphic to the group of order 2. The action of elements
of $M_0$ on the tree $T_{{\x}}$ can be defined in the following
way. A nontrivial element of $C_2^{(i)}$ acts as follows
$\underbrace{ {0} {0}\ldots
 {0}}_{i} {10}v \arr \underbrace{{0} {0}\ldots
 {0}}_{i} {11}v$, $\underbrace{ {0} {0}\ldots
 {0}}_{i} {11}v \arr \underbrace{{0} {0}\ldots
 {0}}_{i} {10}v$ for every $v\in \x^{(i+1)}$, and $w\arr w$ for
 the other words of $\x^*$.

\begin{lemma}
\label{l00001}
Intersection $M_0\cap (\at T_{{\x}})^2$ is trivial.
\end{lemma}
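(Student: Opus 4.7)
The plan is to build, for each level $n$, a sign homomorphism that annihilates $(\at T_{\x})^2$ but detects nontrivial elements of $M_0$. For an automorphism $g$ of $T_{\x}$ and a vertex $v$ at level $n$, let $g|_v \in \Sym(\x_{n+1})$ denote the induced permutation on the children of $v$, so that $g(vx) = g(v)\cdot g|_v(x)$ for $x \in \x_{n+1}$. Define
\[
\Sigma_n(g) = \prod_{v \in \x^n} \sigma(g|_v),
\]
where $\sigma$ is the sign character; the product is finite because each level of the tree is finite.

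The first step is to verify that $\Sigma_n$ is a homomorphism $\at T_{\x} \to \{\pm 1\}$. A direct computation gives the cocycle identity $(gh)|_v = g|_{h(v)}\circ h|_v$, so $\sigma((gh)|_v) = \sigma(g|_{h(v)})\sigma(h|_v)$. Taking the product over $v \in \x^n$ and using that $h$ permutes $\x^n$, one obtains $\Sigma_n(gh) = \Sigma_n(g)\Sigma_n(h)$. Since $\{\pm 1\}$ has exponent two, $\Sigma_n(g^2)=1$ for every $g$, and therefore $(\at T_{\x})^2 \subseteq \ker \Sigma_n$ for every $n$.

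The remaining step is to inspect $\Sigma_n$ on $M_0$. Because the supports of the factors $C_2^{(j)}$ are pairwise disjoint, it suffices to compute the local permutations of a single nontrivial generator of $C_2^{(i)}$. That generator acts on words beginning with $0^i 1$ by toggling the $(i+2)$-th letter between $0$ and $1$, and fixes every other word. An immediate check shows that its only nontrivial local permutation on the whole tree occurs at the vertex $0^i 1 \in \x^{i+1}$, where it induces the transposition of $0$ and $1$ in $\x_{i+2}$; every other local permutation is trivial. Consequently $\Sigma_{i+1}$ sends the nontrivial element of $C_2^{(i)}$ to $-1$, while $\Sigma_{i+1}$ sends every element of $C_2^{(j)}$ with $j\ne i$ to $+1$ (these factors have no nontrivial local permutations at level $i+1$).

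Given a nontrivial $m \in M_0$ with some coordinate $\epsilon_i = 1$, the disjoint-support description of $M_0$ yields $\Sigma_{i+1}(m) = -1$, so $m \notin (\at T_{\x})^2$, whence $M_0 \cap (\at T_{\x})^2 = E$. The only conceptually delicate point is verifying that $\Sigma_n$ is a homomorphism: local permutations do not compose vertex-by-vertex, but the twist $v\mapsto h(v)$ is absorbed once one takes the product over a whole level. The computation of the local permutations of the generators of $C_2^{(i)}$ is then entirely mechanical.
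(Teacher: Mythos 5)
Your proof is correct and takes essentially the same route as the paper: the paper's invariant $\Pi_n$ is exactly your $\Sigma_n$ (the sign of the product of the level-$n$ local permutations on $\x_{n+1}$), and both arguments kill squares via the cocycle identity and detect a nontrivial component $C_2^{(i)}$ through the single transposition at the vertex $0^i1$ of level $i+1$. You merely make explicit the homomorphism property that the paper dismisses as evident.
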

\begin{proof}
For every $g\in \at T_{\x}$ and $n\ge 0$ we can write
$g=g_n(g_{v_1},\ldots,g_{v_k})$, where $g_n\in \at_n T_{\x}$,
$(g_{v_1},\ldots,g_{v_k})\in \Rs_{\at T_{\x}}(n)$, and
$\{v_1,\ldots, v_k\}=\x^n$. Write $\Pi_n g=\prod_{v\in\x^n}g_v$
for every $n\ge 0$.

It is evident that $\Pi_n g^2$ is an even permutation of
$\x_{n+1}$ for every $g\in \at T_{{\x}}$ and $n\ge 0$. Therefore
$\Pi_n h$ is an even permutation of $\x_{n+1}$ for every $h\in
(\at T_{{\x}})^2$ and $n\ge 0$. But for every nontrivial element
$g\in M_0$ there is $m\ge 0$ such that $\Pi_m g$ is an odd
permutation of $\x_{m+1}$. Thus the intersection $M_0\cap (\at
T_{{\x}})^2$ is trivial.
\end{proof}

\begin{prop}
\label{tm_2011_121}
Let $G$ be an infinite automorphism group of $T_{\x}$ and there is
a positive integer $p$ with the following properties:
\begin{enumerate}
\item
The group $G$ can be decomposed into a semidirect product of its
subgroups $(G\cap
\at_k T_{\x},\x^k)\rightthreetimes
\Rs_G(k)$ provided the group $(G\cap \at_k T_{\x},\x^k)$
satisfies {\bf PS}.
\item
$|M_0\cap G|=|G|$.
\end{enumerate}
Then the group $G$ satisfies the L-condition.
\end{prop}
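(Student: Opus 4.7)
The plan is to realize $G$ as a permutational wreath product and then invoke Proposition~\ref{tm_2011_112}. Hypothesis~(1) already supplies the semidirect decomposition $G = (G\cap\at_k T_{\x},\x^k)\rightthreetimes \Rs_G(k)$. Putting $A = G\cap\at_k T_{\x}$ and $X = \x^k$, the group $(A,X)$ is {\bf PS}, and in particular transitive on $X$, so the rigid stabilizers $\rs_G(v)$ with $v\in\x^k$ are pairwise conjugate in $G$. Hence $\Rs_G(k)$ is the internal direct product of $|\x^k|$ isomorphic copies of $H := \rs_G(v_0)$ for a chosen $v_0\in\x^k$, and $G \cong (A,X)\wr H$. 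Because $G$ is infinite and $A$ is finite, $H$ must be infinite and $|G| = |H|$.

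I must now produce a subgroup $M\leq H$ meeting the remaining hypotheses of Proposition~\ref{tm_2011_112}. Fixing $v_0 = \underbrace{00\cdots 0}_k$, the natural candidate is $M := M_0\cap H$. The subgroups $C_2^{(i)}$ have pairwise disjoint vertex supports, and a direct inspection shows that $C_2^{(i)} \leq \rs(v_0)$ if and only if $i\geq k$: the nontrivial element of $C_2^{(i)}$ moves only vertices whose $(i{+}1)$-st letter is $1$, and such vertices sit in the subtree $T_{v_0}$ exactly when the first $k$ letters are zero, i.e.\ when $i\geq k$. Consequently
\[
M_0\cap\rs(v_0) = \prod_{i\geq k} C_2^{(i)}, \qquad M = \Big(\prod_{i\geq k} C_2^{(i)}\Big)\cap G,
\]
which is a subgroup of $H$ of exponent $2$.

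For $|M| = |H|$, project $M_0$ onto the finite group $\prod_{i<k}C_2^{(i)}$ with kernel $\prod_{i\geq k}C_2^{(i)}$; the restriction to $M_0\cap G$ has kernel $M$ and image of order at most $2^k$, so $M$ has finite index in $M_0\cap G$. Hypothesis~(2) then gives $|M|=|M_0\cap G|=|G|=|H|$. For $M\cap H^2 = E$, the inclusion $H\leq G\leq\at T_{\x}$ forces $H^2\leq(\at T_{\x})^2$, and since $M\leq M_0$,
\[
M\cap H^2 \;\leq\; M_0\cap(\at T_{\x})^2 \;=\; E
\]
by Lemma~\ref{l00001}. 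All four hypotheses of Proposition~\ref{tm_2011_112} now hold, which yields the L-condition for $G$.

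The single nonroutine point is the combinatorial identification of which $C_2^{(i)}$ survive inside $\rs(v_0)$ — essentially a matching between the "level index" of $M_0$ and the depth $k$ of the wreath decomposition. Once this is pinned down, the cardinality statement reduces to the comparison of three numbers ($|G|=|H|$, $|M_0\cap G|=|G|$, $[M_0\cap G:M]<\infty$), and the triviality $M\cap H^2=E$ is an immediate citation of Lemma~\ref{l00001}.
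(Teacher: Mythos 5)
Your proof is correct and follows essentially the same route as the paper: realize $G$ as $(G\cap\at_k T_{\x},\x^k)\wr\rs_G(0^k)$, take $M=M_0\cap\rs_G(0^k)$, check $|M|=|G|$ and $M\cap\rs_G(0^k)^2=E$ via Lemma~\ref{l00001}, and invoke Proposition~\ref{tm_2011_112}. The only (harmless) divergence is in the cardinality step, where you use the finite-index projection onto $\prod_{i<k}C_2^{(i)}$ rather than the paper's direct-product decomposition $M_0\cap G=(M_0\cap G\cap\at_k T_{\x})\times M$; your version is in fact the more careful one, since $C_2^{(k-1)}$ lies in neither factor of that decomposition.
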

\begin{proof}
Let $G$ be a group and $k\in
\mathbb{N}$ be such that all conditions of the statement are satisfied. Let $v=0\ldots 0\in \x^k$. Then $G$ is
isomorphic to the permutational wreath product $(G\cap
\at_k T_{\x},\x^k)\wr
\rs_G(v)$.

Consider the subgroup $M=M_0\cap \rs_G(v)$ of the group $G$. It is
obvious that $M$ has exponent 2. Since $M_0\cap G=(M_0 \cap G\cap
\at_k T_{\x}) \times M$ we have $|M|=|M_0\cap G|$.
Combining it with the second condition of the statement we obtain
$|M|=|G|$. It follows that $|M|=|G|=|\rs_G(v)|$. By
Lemma~\ref{l00001} we have $M\cap(\rs_G(v))^2< M_0\cap (\at
T_{\x})^2=E$. Thus the group $G$ satisfies the L-condition by
Proposition~\ref{tm_2011_112}, and the statement follows.
\end{proof}

\subsubsection*{Examples of uncountable groups of automorphisms with the L-condition}
We remind definitions for some classes of automorphisms of
$T_{\x}$.

\begin{itemize}
\item
An automorphism $g$ is called \emph{finitary} if there is a
positive integer $n$ such that the equality $g(uv)=g(u)v$ is valid
for any $u\in \x^n$ and any $v\in
\x^*$.
\item
An automorphism $g$ is called \emph{weakly finitary} if for any
$w\in
\x^\omega$ there are $n\in \mathbb{N}$, $u\in \x^n$, and $v\in
\x^\omega$ such that $w=uv$ and $g(uv)=g(u)v$.
\item
Two words $w_1,w_2\in\x^\omega$ are called \emph{cofinal} if there
are $n\in \mathbb{N}$, $u_1,u_2\in \x^n$, $v\in
\x^\omega$ satisfying $w_1=u_1v$ and $w_2=u_2v$.

An automorphism $g$ is called \emph{cofinal} if it maps cofinal
words to cofinal words.
\item
An automorphism $g$ is called \emph{bicofinal} if both $g$ and
$g^{-1}$ are cofinal.
\end{itemize}

Denote by $\at_f T_{\x}$, $\at_{wf} T_{\x}$, $\at_{b} T_{\x}$ the
sets of all finitary, weakly finitary and bicofinal automorphisms
of $T_{\x}$ respectively. All of these sets are groups.

Note that, by definitions, we have the following inclusions:
$$\at_f T_{\x}<\at_{wf} T_{\x}<\at_{b} T_{\x}.$$

For more details on these groups we refer the reader
to~\cite{sn_c}.

\begin{theorem}
\label{cor_2011_121}
The groups $\at T_{\x}$, $\at_{wf} T_{\x}$ and $\at_{b} T_{\x}$
satisfy the L-condition and so have minimal generating sets.
\end{theorem}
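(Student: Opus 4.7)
The plan is to apply Proposition~\ref{tm_2011_121} to each $G\in\{\at T_{\x},\at_{wf}T_{\x},\at_b T_{\x}\}$ with $k=3$, and then invoke Theorem~\ref{tm_2011_100}.

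I first verify condition~1 of Proposition~\ref{tm_2011_121}. Since every element of $\at_k T_{\x}$ is finitary by definition, and finitary automorphisms are both weakly finitary and bicofinal, we have $\at_k T_{\x}\subseteq G$ and hence $G\cap\at_k T_{\x}=\at_k T_{\x}$ in all three cases. The permutation group $(\at_k T_{\x},\x^k)$ is canonically isomorphic to the iterated wreath product $\Sym(\x_1)\wr\Sym(\x_2)\wr\Sym(\x_3)$ of three finite transitive groups of degree at least $2$, so Proposition~\ref{pr_1}(3) gives \textbf{PS}. For the semidirect decomposition $G=(G\cap\at_k T_{\x})\rightthreetimes\Rs_G(k)$, I would start from the decomposition $\at T_{\x}=\at_k T_{\x}\rightthreetimes\Rs_{\at T_{\x}}(k)$ recorded just before Proposition~\ref{tm_2011_121}: for $g\in G$ the top factor $g_k\in\at_k T_{\x}$ automatically lies in $G$, so $h=g_k^{-1}g\in G\cap\Rs_{\at T_{\x}}(k)$, and a coordinate-wise check shows that each restriction $h|_{T_v}$ ($v\in\x^k$) lies in $\rs_G(v)$, identifying $h$ with an element of $\Rs_G(k)$.

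Next I verify condition~2, namely $|M_0\cap G|=|G|$. The key step is the inclusion $M_0\subseteq\at_{wf}T_{\x}$, which gives $M_0\subseteq G$ in all three cases. Take $g=(c_0,c_1,\ldots)\in M_0$ and $w\in\x^{\omega}$. If $w=0^{\omega}$, every $c_i$ fixes $w$ and every prefix $0^n$, so the weak-finitariness condition is trivial. Otherwise at most one component $c_m$ of $g$ can move $w$: if $w=0^m 1 x v$ for some $x\in\{0,1\}$ and $v\in\x^{\omega}$, then only $c_m$ might alter the $(m+2)$-nd letter, since $c_i$ for $i\ne m$ requires the prefix $0^i 1$ which $w$ lacks; all other $w$ are fixed by every $c_i$. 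Choosing $u$ to be a sufficiently long prefix of $w$ then gives $g(uv)=g(u)v$. Hence $|M_0\cap G|=|M_0|=2^{\aleph_0}$, and since $|G|\le|\at T_{\x}|\le|\x^*|^{|\x^*|}=2^{\aleph_0}$ (any automorphism is determined by its action on the countable vertex set $\x^*$), we obtain $|M_0\cap G|=|G|$.

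With both conditions verified, Proposition~\ref{tm_2011_121} yields the L-condition for each $G$, and Theorem~\ref{tm_2011_100} supplies the minimal generating set. The main subtlety is the inclusion $M_0\subseteq\at_{wf}T_{\x}$: a generic element of $M_0$ has infinitely many nontrivial components and is therefore \emph{not} finitary, yet each single infinite path is perturbed by at most one of these components, which is exactly the content of the weak-finitariness condition.
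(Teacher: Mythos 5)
Your proposal is correct and follows essentially the same route as the paper: apply Proposition~\ref{tm_2011_121} with $k\ge 3$, using Proposition~\ref{pr_1} for condition \textbf{PS} and the inclusion $M_0<\at_{wf}T_{\x}<\at_b T_{\x}<\at T_{\x}$ for the cardinality condition. The only difference is that you supply the details (the semidirect decomposition for the subgroups and the verification that $M_0$ is weakly finitary) that the paper dismisses as clear.
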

\begin{proof}
Let $G$ be one of the above groups. Then $G$ can be decomposed
into semidirect product of its subgroups $(\at_k
T_{\x},\x^k)\rightthreetimes
\Rs_G(k)$ for a positive integer $k$.
The permutation group $(\at_k T_{\x},\x^k)$ satisfies the
condition {\bf PS} for $k\ge 3$ by Proposition~\ref{pr_1}. It is
clear that $M_0<\at_{wf} T_{\x}<\at_{b} T_{\x}<\at T_{\x}$. It
follows that $G$ satisfies all conditions of
Proposition~\ref{tm_2011_121}, and the statement follows.
\end{proof}

\subsubsection*{Examples of countable groups of automorphisms with lifting condition}

\begin{prop}
\label{cor_countable}
Let $G$ be a countable automorphism group of rooted tree $T_{\x}$
with the following properties:
\begin{itemize}
\item
$\at_f T_{\x}<G$.
\item
$\Rs_G(k)=\st_G(k)$ for some integer $k\ge 3$.
\end{itemize}
Then the group $G$ satisfies the L-condition.
\end{prop}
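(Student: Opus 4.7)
The plan is to verify the hypotheses of Proposition~\ref{tm_2011_121} for the given group $G$ and the given integer $k \ge 3$; the conclusion of the present proposition then follows at once. There are exactly two things to check: the semidirect decomposition $G = (G \cap \at_k T_{\x}) \rightthreetimes \Rs_G(k)$ with the top permutation factor satisfying {\bf PS}, and the cardinality equality $|M_0 \cap G| = |G|$.

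For the first, the hypothesis $\Rs_G(k) = \st_G(k)$ is precisely what splits $G$ as $(G \cap \at_k T_{\x}) \rightthreetimes \Rs_G(k)$, in complete parallel with the decomposition of $\at T_{\x}$ recalled in the excerpt. Because $\at_k T_{\x} < \at_f T_{\x} < G$ (the first inclusion comes from taking $n = k$ in the definition of finitary), the intersection $G \cap \at_k T_{\x}$ equals $\at_k T_{\x}$, which is the iterated wreath product $\Sym(\x_1) \wr \Sym(\x_2) \wr \cdots \wr \Sym(\x_k)$. Since $k \ge 3$, regrouping the innermost factors presents this as a three-fold wreath of finite transitive permutation groups of degree at least $2$, so Proposition~\ref{pr_1}(3) supplies {\bf PS}.

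For the second, I would note that every generator of $M_0$ is finitary: the nontrivial element of $C_2^{(i)}$ only swaps the subtrees rooted at $\underbrace{0 \ldots 0}_{i}10$ and $\underbrace{0 \ldots 0}_{i}11$ and fixes everything else, so the finitary condition $g(uw) = g(u)w$ is met with $n = i+2$. Hence $M_0 < \at_f T_{\x} < G$, whence $M_0 \cap G = M_0$ is countably infinite; combined with $G$ being countable and containing the infinite group $\at_f T_{\x}$, this gives $|M_0 \cap G| = \aleph_0 = |G|$. There is no serious obstacle here: the whole argument is a direct reduction to Proposition~\ref{tm_2011_121}, and the only observation that couples the countable hypothesis $\at_f T_{\x} < G$ to that proposition's machinery is the finitary nature of the generators of $M_0$.
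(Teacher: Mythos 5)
Your overall strategy is exactly the paper's: verify the two hypotheses of Proposition~\ref{tm_2011_121} and invoke it. The first half of your argument (the splitting $G=(G\cap \at_k T_{\x})\rightthreetimes \Rs_G(k)$ via $\Rs_G(k)=\st_G(k)$ together with $\at_k T_{\x}<\at_f T_{\x}<G$, and {\bf PS} for $(\at_k T_{\x},\x^k)$ with $k\ge 3$ via Proposition~\ref{pr_1}) is correct and matches the paper.

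The second half contains a genuine error. You claim that since each generator of $M_0$ is finitary, $M_0<\at_f T_{\x}$, and hence $M_0\cap G=M_0$ is countably infinite. But $M_0$ is defined as the \emph{unrestricted} direct product $\prod_{i\ge 0}C_2^{(i)}$: it is not generated (as an abstract group) by the union of the factors $C_2^{(i)}$, it has cardinality $2^{\aleph_0}$, and an element with infinitely many nontrivial coordinates acts nontrivially at arbitrarily deep levels, so it is not finitary. Thus $M_0\not<\at_f T_{\x}$, and indeed $M_0\cap G=M_0$ is impossible for a countable $G$. The step fails as written, but the repair is short and is what the paper does: $M_0\cap \at_f T_{\x}$ is the restricted direct product $\bigoplus_{i\ge 0}C_2^{(i)}$ (exactly the elements of $M_0$ with finitely many nontrivial coordinates), which is countably infinite and contained in $G$ because $\at_f T_{\x}<G$; since $G$ is countable this already forces $|M_0\cap G|=\aleph_0=|G|$. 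With that substitution your proof goes through and coincides with the paper's.
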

\begin{proof}
By the condition of the proposition $M_0\cap G > M_0\cap
\at_f T_{\x}$. Since intersection $M_0\cap \at_f T_{\x}$ is countable, $|M_0\cap G | = |G|$.
Since $\at_f T_{\x}<G$, $\at_k T_{\x}<G$. Therefore $G$ is
decomposed into semidirect product $(\at_k
T_{\x},\x^k)\rightthreetimes
\Rs_G(k)$ of its subgroups. The permutation group $(\at_k T_{\x},\x^k)$
satisfies condition {\bf PS} for $k\ge 3$ by
Proposition~\ref{pr_1}. It follows that $G$ satisfies all
conditions of Proposition~\ref{tm_2011_121}, and the statement
follows.
\end{proof}

From now we assume that $X_1=X_2=\ldots$. In this case $T_{\x}$ is
called \emph{regular rooted tree}.

A vertex subtree $T_v$ of $T_{\x}$ for every $v\in
\x^n$ can be naturally identified with the whole tree $T_{\x}$:
$$\pi_v:v x_{n+1} \ldots x_m\mapsto
x_{n+1} x_{n+2} \ldots x_m.$$

Thus for any $g\in \at T_{\x}$ and $v\in \x^*$ we can define
automorphism $g|_v\in
\at T_{\x}$ in the following way: $g|_v(u)=w$ if and only if
$g(vu)=g(v)w$ for every $u,w\in
\x^*$.

An automorphism $g\in \at T_{\x}$ is {\em finite-state}
automorphism if the set of its states is finite. All finite-state
automorphisms of the tree $T_{\x}$ form the \emph{group $\fat
T_{\x}$ of finite-state automorphisms} of the tree $T_{\x}$.

Let us define the number $\Theta_n(g)=\#\{v\in
\x^n
\ |\ g|_v
\ne e\} $ for every $g\in \at T_{\x}$.

The set of all finite-state automorphisms $g\in \fat T_{\x}$ such
that the sequence $\Theta_n(g)$ is bounded by a polynomial of
degree $m$ form the \emph{group $\pol(m)$ of polynomial
automorphisms of degree} $m$ of the tree $T_{\x}$. The group
$\pol(0)$ is also called the \emph{group of bounded
automorphisms}. The group of \emph{polynomial automorphisms},
denoted by $\pol(\infty)$, is defined to be the union of
increasing chain of groups: $\pol(\infty)=\bigcup_{m=0}^{\infty}
\pol(m)$.

A subgroup $G$ of $\at T_{\x}$ is \emph{self-similar} provided
$g|_v \in G$ for all $g\in G$ and $v\in\x^*$. \emph{The group
$\rat T_{\x}$ of functionally recursive automorphisms} of $T_{\x}$
 can be defined as the union of all finitely
generated self-similar subgroups of $\at T_{\x}$.

We refer the reader to~\cite{sid:cycl, brsi:basegr,sidki_monogr}
for details concerning groups defined above.

\begin{theorem}
\label{cor_2011_121_2}
The groups $\fat T_{\x}$, $\pol(m)$ ($m\ge 0$), $\pol(\infty)$,
and $\rat T_{\x}$ satisfy the L-condition and so have minimal
generating sets.
\end{theorem}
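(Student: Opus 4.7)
The plan is to apply Proposition~\ref{cor_countable} to each of the four groups $\fat T_{\x}$, $\pol(m)$, $\pol(\infty)$, and $\rat T_{\x}$. For each, I need to verify three things: (i) the group is countable, (ii) it contains $\at_f T_{\x}$, and (iii) there is an integer $k\ge 3$ with $\Rs_G(k)=\st_G(k)$. Countability is routine in all four cases: finite-state automorphisms are determined by a finite labeled Moore automaton over a finite alphabet, so $\fat T_{\x}$ is countable; its subgroups $\pol(m)$ and $\pol(\infty)=\bigcup_m \pol(m)$ are then also countable; and $\rat T_{\x}$ is a countable union of finitely generated groups. The inclusion $\at_f T_{\x}<G$ is immediate from the definitions: every finitary automorphism has only finitely many nontrivial states (so lies in $\fat T_{\x}$), lies in $\pol(0)$ (hence in each $\pol(m)$ and in $\pol(\infty)$), and generates a finite self-similar subgroup (hence lies in $\rat T_{\x}$).

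The core of the argument is the rigid stabilizer identity, and I would take $k\ge 3$ arbitrary. In all cases the relevant observation is that if $g\in\st_G(k)$, then $g$ decomposes as $g=(g|_{v_1},\dots,g|_{v_{|\x^k|}})$ acting independently on the subtrees, and I need each ``one-coordinate'' element $(e,\dots,e,g|_{v_i},e,\dots,e)$ to lie in $G$; this exactly gives $\st_G(k)=\Rs_G(k)$. For $\fat T_{\x}$ this is transparent: the states of $(e,\dots,e,g|_{v_i},e,\dots,e)$ are the identity together with a subset of the states of $g|_{v_i}$, which are among the finitely many states of $g$. For $\pol(m)$ and $\pol(\infty)$ the complexity function $\Theta_n$ of the one-coordinate element is bounded above by $\Theta_{n+k}(g)$ shifted appropriately, so the polynomial-degree bound is preserved.

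The main obstacle, and the only step requiring genuine care, is the rigid stabilizer condition for $\rat T_{\x}$. Here $g$ lies in some finitely generated self-similar subgroup $H<\at T_{\x}$, but the subgroup generated by the one-coordinate elements might not obviously be self-similar or finitely generated. The way I would handle this is to exhibit an explicit finitely generated self-similar group containing $(e,\dots,e,g|_{v_i},e,\dots,e)$: take the self-similar closure of $H$ together with a fixed element of $\at_k T_{\x}$ that sends $v_i$ to any prescribed vertex of level $k$. Since $H$ is self-similar, $g|_{v_i}\in H$, and the sections of the one-coordinate element either equal $g|_{v_i}$ or are trivial, so augmenting $H$ by the finitely many level-$k$ transpositions already in $\at_f T_{\x}<\rat T_{\x}$ yields a finitely generated self-similar group containing the required element.

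Once the hypotheses of Proposition~\ref{cor_countable} are checked, that proposition yields the L-condition for each of $\fat T_{\x}$, $\pol(m)$, $\pol(\infty)$, and $\rat T_{\x}$, and Theorem~\ref{tm_2011_100} then supplies a minimal generating set, completing the proof.
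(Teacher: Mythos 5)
Your proposal is correct and follows essentially the same route as the paper: both reduce the theorem to Proposition~\ref{cor_countable} by checking countability, the inclusion $\at_f T_{\x}<G$, and the identity $\st_G(k)=\Rs_G(k)$ for some $k\ge 3$. The paper simply asserts these three facts, whereas you supply the verifications (in particular the functional recursiveness of the one-coordinate elements needed for $\rat T_{\x}$), which the paper leaves to the reader.
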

\begin{proof}
Let $G$ be one of the above groups. It is well known that G is
countable group. By definition, the group $G$ contains the group
$\at_f T_{\x}$. Furthermore, we have $\st_G(k)=\Rs_G(k)$ for all
positive integer $k$. It follows that $G$ satisfies the
L-condition by Proposition~\ref{cor_countable}, and the theorem
follows.
\end{proof}

\bibliographystyle{amsplain}

\bibliography{my2,mymath_d,lavrenyuk,nekrash_d}

\def\cprime{$'$}
\providecommand{\bysame}{\leavevmode\hbox to3em{\hrulefill}\thinspace}
\providecommand{\MR}{\relax\ifhmode\unskip\space\fi MR }
\providecommand{\MRhref}[2]{%
  \href{http://www.ams.org/mathscinet-getitem?mr=#1}{#2}
}
\providecommand{\href}[2]{#2}
\begin{thebibliography}{10}

\bibitem{aleshin:automata}
S.~Aleshin, \emph{Automata in algebra}, Journal of Mathematical Sciences
  \textbf{168} (2010), 14--20.

\bibitem{Aleshin1}
S.V. Aleshin, \emph{{On the absence of bases in certain classes of initial
  automata.}}, Probl. Kibernetiki \textbf{22} (1970), 67--74 (Russian).

\bibitem{bhattacharjee}
Meenaxi Bhattacharjee, \emph{{The probability of generating certain profinite
  groups by two elements.}}, Isr. J. Math. \textbf{86} (1994), no.~1-3,
  311--329 (English).

\bibitem{bondarenko}
Ievgen~V. Bondarenko, \emph{{Finite generation of iterated wreath products.}},
  Arch. Math. \textbf{95} (2010), no.~4, 301--308 (English).

\bibitem{brsi:basegr}
Andrew~M. Brunner and Said~N. Sidki, \emph{On the automorphism group of the
  one-rooted binary tree}, J. Algebra \textbf{195} (1997), 465--486.

\bibitem{csac_gecen}
B.~Cs{\'a}k{\'a}ny and F.~Gecseg, \emph{{On the group of automaton
  permutations.}}, Kibernetika (1965), no.~5, 14--17 (Russian).

\bibitem{Domosi1}
P.~D{\"o}m{\"o}si, \emph{{On the semigroup of automaton mappings with finite
  alphabet.}}, Acta cybernetica \textbf{1} (1972), 251--254 (English).

\bibitem{Domosi_probl}
P{\'a}l D{\"o}m{\"o}si, \emph{{Some of my favourite unsolved problems.}},
  Unsolved problems on mathematics for the 21st century, {Amsterdam: IOS Press;
  Tokyo: Ohmsha}, 2001, pp.~159--168 (English).

\bibitem{Domosi_Nehaniv}
P{\'a}l D{\"o}m{\"o}si and Chrystopher~L. Nehaniv, \emph{{Algebraic theory of
  automata networks. An introduction.}}, {Philadelphia, PA: Society for
  Industrial and Applied Mathematics (SIAM)}, 2005 (English).

\bibitem{grineksu_en}
Rostislav~I. Grigorchuk, Volodymyr~V. Nekrashevich, and Vitali{\u\i}~I.
  Sushchanskii, \emph{Automata, dynamical systems and groups}, Proceedings of
  the Steklov Institute of Mathematics \textbf{231} (2000), 128--203.

\bibitem{lavr:mgs2en}
Yaroslav Lavrenyuk, \emph{On basis in automorphism groups of rooted trees},
  Dopov. Nats. Akad. Nauk Ukr. (2012), no.~9, 19--22 (Ukrainian).

\bibitem{lavr:mgsen}
\bysame, \emph{On basis in the full automorphism group of binary rooted tree},
  Dopov. Nats. Akad. Nauk Ukr. (2012), no.~7, 22--25 (Ukrainian).

\bibitem{lucchini}
Andrea Lucchini, \emph{{Profinite groups with nonabelian crowns of bounded rank
  and their probabilistic zeta function.}}, Isr. J. Math. \textbf{181} (2011),
  53--64 (English).

\bibitem{sn_c}
Volodymyr~V. Nekrashevych and Vitali{\u\i}~I. Sushchansky, \emph{On confinal
  dynamics of rooted tree automorphisms}, Computational and Geometric Aspects
  of Modern Algebra (Michael~Atkinson et~al., ed.), London Math. Soc. Lect.
  Note Ser., vol. 275, Cambridge Univ. Press, Cambridge, 2000, pp.~229--246.

\bibitem{problaut}
Volodymyr~V. Nekrashevych and Vitali{\u\i}~I. Sushchansky, \emph{Some problems
  on groups of finitely automatic permutations}, Mat. Stud. \textbf{13} (2000),
  no.~1, 93--96.

\bibitem{aol11}
A.~Oliynyk, \emph{Finite state wreath powers of transformation semigroups},
  Semigroup Forum \textbf{82} (2011), 423--436.

\bibitem{quick}
Martyn Quick, \emph{{Probabilistic generation of wreath products of non-Abelian
  finite simple groups.}}, Commun. Algebra \textbf{32} (2004), no.~12,
  4753--4768 (English).

\bibitem{ruzicka}
Pavel R\.{u}\v{z}i\v{c}ka, \emph{Abelian groups with a minimal generating set},
  Quaestiones Mathematicae \textbf{33} (2010), no.~2, 147--159.

\bibitem{sidki_monogr}
Said~N. Sidki, \emph{Regular trees and their automorphisms}, Monografias de
  Matematica, vol.~56, IMPA, Rio de Janeiro, 1998.

\bibitem{sid:cycl}
\bysame, \emph{Automorphisms of one-rooted trees: growth, circuit structure and
  acyclicity}, J. of Mathematical Sciences (New York) \textbf{100} (2000),
  no.~1, 1925--1943.

\end{thebibliography}

\end{document}